\numberwithin{equation}{section}
\theoremstyle{plain}
\newtheorem{theorem}{Theorem}[section]
\newtheorem{lemma}{Lemma}[section]
\theoremstyle{definition}
\theoremstyle{remark}
\newtheorem{remark}{Remark}[section]
\newcommand{\ud}{\mathrm{d}}
\newcommand{\Int}{\mathbb Z}
\newcommand{\oneb}{{\mathbf 1}}
\newcommand{\bb}{{\mathbf b}}
\newcommand{\fb}{{\mathbf f}}
\newcommand{\gb}{{\mathbf g}}
\newcommand{\rb}{{\mathbf r}}
\newcommand{\ub}{{\mathbf u}}
\newcommand{\xb}{{\mathbf x}}
\newcommand{\yb}{{\mathbf y}}
\newcommand{\zerob}{{\mathbf 0}}
\newcommand{\E}{{\mathcal E}}
\newcommand{\lpnorm}[2]{\left|\left|#1\right|\right|_{\ell^{#2}_h}}
\newcommand{\wpnorm}[2]{\left|\left|#1\right|\right|_{w^{1,#2}_h}}
\newcommand{\half}{{\textstyle \frac{1}{2}}}
\newcommand{\pd}[2]{\frac{\partial #1}{\partial #2}}
\begin{document}

\title{An Analysis of the Effect of Ghost Force Oscillation on Quasicontinuum Error}
\author{Matthew Dobson}
\author{Mitchell Luskin}

\address{Matthew Dobson\\
School of Mathematics \\
University of Minnesota \\
206 Church Street SE \\
Minneapolis, MN 55455 \\
U.S.A.}
\email{dobson@math.umn.edu}

\address{Mitchell Luskin \\
School of Mathematics \\
University of Minnesota \\
206 Church Street SE \\
Minneapolis, MN 55455 \\
U.S.A.}
\email{luskin@umn.edu}

\thanks{
This work was supported in part by DMS-0757355,
 DMS-0811039,  the Institute for Mathematics and
Its Applications,
 the University of Minnesota Supercomputing Institute, and
the University of Minnesota Doctoral Dissertation Fellowship.
  This work is also based on
work supported by the Department of Energy under Award Number
DE-FG02-05ER25706.
}

\keywords{} 

\subjclass[2000]{65Z05,70C20}

\date{\today}

\begin{abstract}
The atomistic to continuum interface for quasicontinuum energies
exhibits nonzero forces under uniform strain that have been
called ghost forces.
In this paper,
we prove for a linearization of a one-dimensional quasicontinuum energy
around a uniform strain
that the effect of the ghost forces on the displacement
nearly cancels and has a small effect on the error away from the interface.
We give optimal order error estimates
that show that the quasicontinuum displacement
converges to the atomistic displacement at the rate O($h$)
in the discrete $\ell^\infty$ and
$w^{1,1}$ norms where $h$ is the interatomic spacing.
We also give a proof that the error in the displacement gradient
decays away from the interface to O($h$) at distance O($h|\log h|$)
in the atomistic region and distance O($h$) in the continuum region.
E, Ming, and Yang previously gave a counterexample to convergence in the $w^{1,\infty}$ norm for a harmonic interatomic potential.
Our work gives an explicit and simplified form for the decay of the effect of the
atomistic to continuum coupling error in terms of a general underlying interatomic potential and gives the estimates described above in the discrete $\ell^\infty$ and $w^{1,p}$ norms.
\end{abstract}

\maketitle
{
\thispagestyle{empty}

\section{Introduction}

The quasicontinuum method (QC) reduces the computational complexity of
atomistic simulations by replacing smoothly varying regions of the
material with a continuum approximation derived from the atomistic
model~\cite{pinglin03, pinglin05,
legollqc05, ortnersuli, OrtnerSueli:2006d, e05, tadm96, knaportiz, e06,
mill02, rodney_gf, miller_indent, curtin_miller_coupling,
jacobsen04,dobs08}.  This is extremely effective in simulations
involving defects, which have singularities in the deformation gradient.
In such simulations, a few
localized regions require the accuracy and high computational expense
of atomistic scale resolution, but the rest of the material has a
slowly varying deformation gradient which can be more efficiently computed
using the continuum approximation without loss of the desired accuracy.
Adaptive algorithms have been developed for QC to determine
which regions require the accuracy of atomistic modeling and how to
coarsen the  finite element mesh in the continuum
region~\cite{ortnersuli,OrtnerSueli:2006d,prud06,oden06,arndtluskin07a,arndtluskin07b,ArndtLuskin:2007c}.
The atoms retained in the atomistic region and the atoms at nodes of
the piecewise linear finite element mesh in the continuum region are
collectively denoted as {\em representative atoms}.

Recent years have seen the development of many QC approximations that differ
in how they compute interactions among the representative atoms.
In the following, we concern
ourselves with the original energy-based quasicontinuum (QCE)
approximation~\cite{tadm96,mill02}, but the phenomena that we analyze
occur in all other quasicontinuum approximations, as well as
in other multiphysics coupling methods~\cite{e06}.  In QCE, a total energy is
created by summing energy contributions from each representative atom in
the atomistic region and from each element in the continuum region,
where the volume of the elements in the atomistic to continuum interface
is modified to exactly conserve mass.  This construction was
chosen so that for any uniform strain the QCE energy,
the continuum energy, and the atomistic energy are identical.
(As discussed later, this conservation property for the QCE approximation
is not sufficient to prevent the existence of nonzero forces at the
atomistic to continuum interface for uniform strain.)
The representative
atoms then interact via forces defined by the total energy.  This
makes for a simple and versatile method that can treat complicated
geometries and can be used with adaptive algorithms that modify the mesh and
atomistic regions during a quasi-static process.
Other atomistic to continuum approaches have been
proposed, for example, that utilize overlapping or blended
domains~\cite{BadiaParksBochevGunzburgerLehoucq:2007,ParksBochevLehoucq:2007}.

One drawback of the energy-based quasicontinuum approximation that has
received much attention is the fact that at the atomistic to continuum
 interface the balance of force equations do not give a
consistent scheme~\cite{shenoy_gf}.  As explained in Section~\ref{sec:model}, the
equilibrium equations in the interior of both the atomistic region and
the continuum region give consistent finite difference schemes for the
continuum limit, whereas the QC equilibrium equations near the
interface are not consistent with the continuum limit.  This is most
easily seen by considering a uniform strain, which
will be assigned identically zero elastic forces by any
consistent scheme.  (Ensuring that a given scheme assigns zero forces for
uniform strain has been known as the ``patch test'' in
the theory of finite elements~\cite{strangfix}.)
The nonzero residual forces present in QCE
for uniform strain
have been called ``ghost forces''~\cite{shenoy_gf,dobs08}.

In this paper, we give optimal order error estimates for the effect of the
inconsistency on the displacement and displacement gradient for a linearization of
a one-dimensional atomistic energy
and its quasicontinuum approximation.
We consider the linearization of general interatomic potentials
which are concave near second-neighbor interatomic distances.
This property guarantees that the interfacial error due to the Cauchy-Born
approximation with a second-neighbor cut-off is positive~\cite[p. 117]{dobs08} and 
that the quasicontinuum
error is not oscillatory in the atomistic region (see Section~\ref{sec:comp}).
Similar optimal order error estimates have been given by E, Ming, and Yang~\cite{emingyang}
for a harmonic interatomic potential.

We begin
by linearizing a one-dimensional atomistic energy, its local quasicontinuum approximation
(which we will call the continuum energy),
and its quasicontinuum approximation
about a uniform strain
for a second-neighbor atomistic energy.
We will show in Section~\ref{sec:model} that the three systems of equilibrium equations are then
\begin{equation*}
\begin{split}
L^{a,h}  \ub_a    &= \fb,\qquad \qquad \text{(atomistic)}\\
L^{c,h}  \ub_c    &= \fb,\qquad \qquad \text{(continuum)}\\
L^{qc,h} \ub_{qc}-\gb &= \fb, \qquad \qquad \text{(quasicontinuum)}
\end{split}
\end{equation*}
where $\fb$ is an external loading, $L$ and $\ub$ are the linearized operator
and corresponding displacement for each scheme,
$\gb$  is non-zero only in the atomistic
to continuum interface, and $h$ is the interatomic spacing.
The term $\gb$ in the quasicontinuum equilibrium
equations is due to the unbalanced second-neighbor
interactions in the interface~\eqref{ghost} and
for uniform stretches is precisely
the ghost force described in~\cite{shenoy_gf,dobs08,mill02}.


Formally, the error decomposes as
\begin{equation*}
\ub_a - \ub_{qc} = ( (L^{a,h})^{-1} - (L^{qc,h})^{-1}) \fb -
(L^{qc,h})^{-1} \gb.
\end{equation*}
(The operators are all translation invariant, so they only have solutions
up to the choice of an additive constant.)
In this paper, we focus on the second term, $(L^{qc})^{-1} \gb,$ which is the
error due to the inconsistency at the interface.  To do so, we consider the
case of no external field, $\fb=\zerob,$ which will make
$\ub_a = \zerob.$  For most applications of the quasicontinuum method,
the only external field is due to loads that are applied on the
boundary of the material, far from the atomistic to continuum
interface.

We showed in~\cite{dobs08}
that the ghost forces are oscillatory and sum to zero.
In this paper, we prove that the error in the displacement
gradient is O(1) at the interface and
decays away from the interface to O($h$) at distance O($h|\log h|$)
in the atomistic region and distance O($h$) in the continuum region.
As noted above, similar results have been given
in~\cite{emingyang} for a harmonic interatomic potential
with $\fb\ne \zerob$ and
Dirichlet boundary conditions.  Here, we present a simplified approach
starting from
a linearization of a quasicontinuum approximation with a concave
second-neighbor interaction.  We explicitly give the form of the solution
and analyze the solution in discrete $l^\infty$ and $w^{1,p}$ norms.
We 
show that the quasicontinuum displacement converges to the atomistic
displacement at the
rate O($h$) in the discrete $l^\infty$ and $w^{1,1}$ norms where $h$
is the interatomic spacing.

In Section~\ref{sec:model}, we describe the energy-based
quasicontinuum approximation (QCE) and set up the analysis.
In Section~\ref{sec:comp}, we prove
Theorem~\ref{thm2} for the quasicontinuum energy that 
gives an optimal order, O($h$) error estimate in the  $l^\infty$ norm and a
O($h^{1/p}$) error estimate in the $w^{1,p}$ norm for $1 \leq p <
\infty.$  Note that for simplicity
the models and analysis are presented for the
case where no degrees of freedom have been removed in the continuum region,
but
we explain in Remark~\ref{coarsen} that identical results hold when the continuum region is
coarsened.  We present numerical computations in 
Figure~\ref{qc_comp} that clearly show that the error is localized in
the atomistic to continuum interface.

\section{One-Dimensional, Linear Quasicontinuum Approximation}
\label{sec:model}
We consider an infinite one-dimensional chain of atoms with
periodicity $2F$ in the deformed configuration.
Let $y_j$
denote the atomic positions for $-\infty<j<\infty,$  where there
are $2N$ atoms in each period.    Let
$h = 1/N$ and let
\begin{equation*}
u_j := y_j -  F h j
\end{equation*}
denote the displacement from the average interatomic spacing, $F h.$
In the following, we analyze the behavior
of the quasicontinuum method as the atomistic chain approaches the
continuum limit with $F$ fixed and $N \rightarrow \infty.$

The atomistic energy for a period of the chain is
\begin{equation}\label{en}
\E^{tot,h}(\yb) := h \sum_{j=-N+1}^{N}
\left[\phi\left(\frac{y_{j+1} - y_{j}}{h} \right)
+ \phi\left(\frac{y_{j+2} - y_{j}}{h}\right) - f_j y_{j}\right],
\end{equation}
where  $\phi(r)$ is a two-body interatomic
potential (for example, the Lennard-Jones potential
${\phi}(r) = 1/ r^{12} - 2/r^6$) and $\fb = (f_{-N+1},\dots,f_{N})$ are
external forces applied as dead loads on the atoms.
The periodic conditions
\begin{equation*}
y_{j+2N} = y_j + 2 F \qquad\text{or}\qquad u_{j+2N} = u_j
\end{equation*}
allow $\E^{tot, h}$ to be written in terms of $\yb := (y_{-N+1},\dots,y_{N}).$
We assume that
$\sum_{-N+1}^N f_j=0,$ otherwise there are no energy minimizing solutions since
the elastic energy is translation invariant.  In the following, we discuss the
existence and uniqueness of solutions to each of the models we encounter.
We note that the energy per bond in~\eqref{en}
has been scaled like $h\phi(r/h).$  This scaling implies that if we
let $y_j = y(j/N)$ and $f_j = f(j/N)$ for $j=-N+1, \dots, N$ where
$y \in C^1([-1,1])$ and
$f \in C([-1,1]),$ then as $N\to \infty$ and $F$ is held fixed, the
energy of a period~\eqref{en} converges to
\begin{equation*}
\int^1_{-1} \phi(y'(x)) + \phi(2 y'(x)) - f(x) y(x) \, \ud x.
\end{equation*}

We expand first neighbor terms around $F,$ giving
\begin{equation*}
\begin{split}
\phi \left(\frac{y_{j+1} - y_{j}}{h}\right) &=
\phi \left(F + \frac{u_{j+1} - u_{j}}{h}\right) \\
&= \phi(F)
+ \phi'(F) \left.\frac{u_{j+1} - u_{j}}{h}\right.
+ \half \phi''(F) \left(\frac{u_{j+1} - u_{j}}{h}\right)^2
+O\left(\left|\frac{u_{j+1}-u_{j}}{h}\right|^3\right),
\end{split}
\end{equation*}
and the second neighbor terms around $2F,$ giving
\begin{equation*}
\begin{split}
\phi \left(\frac{y_{j+2} - y_{j}}{h}\right)
&= \phi \left(2F + \frac{u_{j+2} - u_{j}}{h}\right) \\
&= \phi(2 F) + \phi'(2 F) \left.\frac{u_{j+2} - u_{j}}{h}\right.
+ \half \phi''(2 F) \left(\frac{u_{j+2} - u_{j}}{h}\right)^2
+O\left(\left|\frac{u_{j+2}-u_{j}}{h}\right|^3\right).
\end{split}
\end{equation*}

\subsection{Atomistic Model}
The linearized atomistic energy is then given by
\begin{equation}\label{at}
\begin{split}
\E^{a,h}(\ub) &:= h \sum_{j=-N+1}^{N}
\left[ \phi'_F \left.\frac{u_{j+1} - u_{j}}{h}\right.
  + \half \phi''_{F} \left(\frac{u_{j+1} - u_{j}}{h}\right)^2 \right. \\
&\qquad\qquad \left. + \phi'_{2F} \left.\frac{u_{j+2} - u_{j}}{h}\right.
  + \half \phi''_{2 F} \left(\frac{u_{j+2} - u_{j}}{h}\right)^2
  - f_j u_j\right],
\end{split}
\end{equation}
where
$\phi'_F := \phi'(F),   \phi''_{F} := \phi''(F),
\phi'_{2F} := \phi'(2F),  \phi''_{2 F} := \phi''(2F),$
and $\ub := (u_{-N+1}, \dots, u_{N}).$  Note that here and in the following,
we neglect the additive constant
$\phi(F) + \phi(2F) - h \sum_{j=-N+1}^N f_j F h j$ in
the linearized energy.  We assume that $\phi \in C^2([r_0, \infty))$
for some $r_0$ such that $0< r_0 < F,$
and
\begin{equation}
\label{assume}
\phi''_F > 0 \text{ and } \phi''_{2F} < 0.
\end{equation}
This holds true for the Lennard-Jones potential for $F h$ below the load limit,
unless the chain is extremely compressed (less than 60\% of the equilibrium
length).  
The property $\phi''_{2F} <0$ ensures that the quasicontinuum
error is not oscillatory in the atomistic region (see Section~\ref{sec:comp}).

We furthermore assume that
\begin{equation}
\label{assume2}
\phi''_F + 5 \phi''_{2F} > 0,
\end{equation}
which will be sufficient to
give solutions to the QC equilibrium equations under the assumption of
no resultant external forces (see Lemma~\ref{wellposed}).  In contrast, the weaker assumption
$\phi''_F + 4 \phi''_{2F} >0$
is sufficient for the fully atomistic or fully continuum approximation.
The equilibrium equations,
$\frac1h\pd{\E^{a,h}}{u_j}(\ub) = 0,$ for the
atomistic model~\eqref{at} are
\begin{equation}\label{atom}
\begin{split}
(L^{a,h} \ub)_j
 &= \frac{- \phi''_{2 F} u_{j+2} - \phi''_{F} u_{j+1}
+ 2 (\phi''_{F} + \phi''_{2 F}) u_{j} - \phi''_{F} u_{j-1}
- \phi''_{2 F} u_{j-2}}{h^2} =  f_j, \\
&\hspace{1.9in} u_{j+2N} = u_j,
\end{split}
\end{equation}
for $-\infty < j < \infty.$ Note that scaling by $\frac{1}{h}$
makes this a consistent approximation of the boundary value problem
\begin{equation}
\label{bvp}
\begin{split}
- (\phi''_{F}+4\phi''_{2 F}) u''(x) = f &\qquad \textrm{ for } -\infty<x <\infty,\\
 u(x+2) = u(x) &\qquad \textrm{ for } -\infty<x <\infty.
\end{split}
\end{equation}
The linearized atomistic energy~\eqref{at} has a unique minimum (up to a
constant) if $\phi''_{F} + 4 \phi''_{2 F}>0,$ provided that
$\sum_{j=N-1}^{N} f_j = 0.$ Standard ODE results show that~\eqref{bvp} has a
unique solution (up to a constant)
provided that $\int^1_{-1} f(x) \, \ud x = 0 .$

\begin{remark}
\label{rem1}
For the atomistic energy~\eqref{at}, the linear terms sum to zero by the
periodicity of the displacement, since
\begin{equation*}
\begin{split}
 h \sum_{j=-N+1}^{N} &\left[ \phi'_F \ \frac{u_{j+1} - u_{j}}{h}
 + \phi'_{2F} \ \frac{u_{j+2} - u_{j}}{h} \right] \\
&=
\phi'_F \left[u_{N+1} - u_{-N+1}\right] +
\phi'_{2F} \left[u_{N+2} + u_{N+1} - u_{-N+2} - u_{-N+1}\right] = 0.
\end{split}
\end{equation*}
However, we keep these terms in the model since they do not sum to zero when
the atomistic model is coupled to the continuum approximation in the
quasicontinuum energy.  The resulting terms give a more accurate representation
of what happens in the non-linear quasicontinuum model.
\end{remark}

\subsection{Continuum Approximation}
The continuum approximation splits the chain into linear finite elements with
nodes given by the representative atoms, which we recall are a
subset of the atoms in the chain.  The energy of the chain is the sum
of element energies which depend only on the element's deformation
gradient, the linear deformation that interpolates its nodal
positions.  The energy of an element is then computed by applying the
element's deformation gradient to the reference lattice, computing the
energy per atom using the atomistic model, and multiplying by the
number of atoms in the element (where the boundary atoms are shared
equally between neighboring elements).  If the continuum approximation
is not coarsened (every atom is a representative atom), then
the continuum energy is given by
\begin{equation}\label{cont}
\begin{split}
\E^{c,h}(\ub) &:= h \sum_{j=-N+1}^{N} \left[ (\phi'_F + 2 \phi'_{2F})
\left(\frac{u_{j+1} - u_{j}}{h}\right)
+ \half (\phi''_F + 4 \phi''_{2F}) \left(\frac{u_{j+1} - u_{j}}{h}\right)^2
- f_j u_j\right].
\end{split}
\end{equation}
See~\cite{dobs08} for a derivation of the continuum energy and a
discussion of the error terms at the element boundaries.
For $j \in \{-N+1, \dots, N\}$, the equilibrium equations
for the continuum approximation are
\begin{equation}\label{cont1}
(L^{c,h} \ub)_j = (\phi''_F + 4 \phi''_{2F})
\left[\frac{- u_{j+1} + 2 u_{j} - u_{j-1}}{h^2} \right] =  f_j,
\end{equation}
which is also a consistent approximation for the boundary value
problem~\eqref{bvp}.
It is easy to see that the
continuum energy~\eqref{cont} has a unique minimum (up to a constant)
if $\phi''_{F} + 4 \phi''_{2 F}>0,$ provided that $\sum_{j=N-1}^{N} f_j = 0.$
The quasicontinuum method
inherently supports coarsening, but we neglect it here since in one dimension
this only changes the scaling of equilibrium equations.

\subsection{Splitting the Energy}
We can split the atomistic energy and the continuum energy into
per-atom contributions so that
\begin{equation*}
\E^{a,h}(\ub) = h \sum_{j=-N+1}^{N} \left[\E^{a,h}_j\left(\ub\right) - f_j u_j\right]
\quad \text{ and } \quad
\E^{c,h}(\ub) = h \sum_{j=-N+1}^{N} \left[\E^{c,h}_j\left(\ub\right) - f_j u_j\right].
\end{equation*}
There are many possible ways to define the per-atom contributions, and
we do this in such a
way that these contributions are linearizations of the ones in the fully
nonlinear case presented in~\cite{dobs08,tadm96}.  In this case, we split the
energy of each bond to obtain
\begin{equation}
\label{atomsplit}
\begin{split}
\E^{a,h}_j(\ub) :=
\frac{1}{2} \Bigg[ \phi'_F &\left.\frac{u_{j+1} - u_{j}}{h}\right.
  + \half \phi''_{F} \left(\frac{u_{j+1} - u_{j}}{h}\right)^2 \\
&+ \phi'_{2F} \left.\frac{u_{j+2} - u_{j}}{h}\right.
  + \half \phi''_{2 F} \left(\frac{u_{j+2} - u_{j}}{h}\right)^2 \Bigg]\\
+\frac{1}{2} \Bigg[ \phi'_F &\left.\frac{u_{j} - u_{j-1}}{h}\right.
  + \half \phi''_{F} \left(\frac{u_{j} - u_{j-1}}{h}\right)^2 \\
&+ \phi'_{2F} \left.\frac{u_{j} - u_{j-2}}{h}\right.
  + \half \phi''_{2 F} \left(\frac{u_{j} - u_{j-2}}{h}\right)^2 \Bigg],
\end{split}
\end{equation}
and
\begin{equation}
\label{contsplit}
\begin{split}
\E^{c,h}_j(\ub) := \frac{1}{2} &\left[ (\phi'_F + 2 \phi'_{2F})
\left(\frac{u_{j+1} - u_{j}}{h}\right)
+ \half (\phi''_F + 4 \phi''_{2F}) \left(\frac{u_{j+1} - u_{j}}{h}\right)^2 \right]\\
&+ \frac{1}{2} \left[ (\phi'_F + 2 \phi'_{2F})
\left(\frac{u_{j} - u_{j-1}}{h}\right)
+ \half (\phi''_F + 4 \phi''_{2F}) \left(\frac{u_{j} - u_{j-1}}{h}\right)^2 \right].
\end{split}
\end{equation}

\subsection{Energy-Based Quasicontinuum Approximation}
The energy-based quasicontinuum approximation partitions
the representative atoms into atomistic and continuum representative atoms and
assigns to each atom the split energy corresponding to its
type (\ref{atomsplit}-\ref{contsplit}).
We define the nodes $-N+1,\dots,-K-1$ and $K+1,\dots,N$ to be continuum and
$-K,\dots,K$ to be atomistic, where we assume that $2\le K\le N-2$
to ensure well-defined atomistic and continuum regions.
The quasicontinuum energy is then
\begin{equation}
\label{qceTot}
\E^{qc,h}(\ub) :=
\sum_{j = -N+1}^{-K-1}  \E_{j}^{c,h}\left(\ub\right) +
\sum_{j=-K}^{K}   \E_{j}^{a,h}\left(\ub\right) +
\sum_{j= K+1}^{N} \E_{j}^{c,h}\left(\ub\right) -
\sum_{j = -N+1}^{N} f_j u_j.
\end{equation}

Since the energy is quadratic, the equilibrium equations,
$\frac1h\pd{\E^{qc,h}}{u_j}(\ub_{qc})=0,$ take the form
\begin{equation}\label{qc1}
L^{qc,h} \ub_{qc}-\gb = \fb.
\end{equation}
For $0 \leq j \leq N,$ the QCE operator is  given by
\begin{equation*}
\label{fishtail}
\begin{split}
(L^{qc,h} \ub)_j &= \phi''_F \frac{-u_{j+1} +2 u_j - u_{j-1}}{h^2} \\
&+ \begin{cases}
\displaystyle
 4 \phi''_{2 F} \frac{-u_{j+2} +2 u_j - u_{j-2}}{4 h^2},
& 0 \leq j \leq K-2, \\[6pt]
\displaystyle
 4 \phi''_{2 F} \frac{-u_{j+2} +2 u_j - u_{j-2}}{4 h^2}
+ \frac{\phi''_{2 F}}{h} \frac{u_{j+2} - u_{j}}{2 h},  & j = K-1, \\[6pt]
\displaystyle
 4 \phi''_{2 F} \frac{-u_{j+2} +2 u_j - u_{j-2}}{4 h^2}
- \frac{2 \phi''_{2 F}}{h} \frac{u_{j+1} - u_{j}}{h}
+ \frac{\phi''_{2 F}}{h} \frac{u_{j+2} - u_{j}}{2 h},
& j = K, \\[6pt]
\displaystyle
4 \phi''_{2 F} \frac{-u_{j+1} +2 u_j - u_{j-1}}{h^2}
-  \frac{2 \phi''_{2 F}}{h} \frac{u_{j} - u_{j-1}}{h}
+ \frac{\phi''_{2 F}}{h} \frac{u_{j} - u_{j-2}}{2 h}, & j = K+1, \\[6pt]
\displaystyle
4 \phi''_{2 F} \frac{-u_{j+1} +2 u_j - u_{j-1}}{h^2}
+ \frac{\phi''_{2 F}}{h} \frac{u_{j} - u_{j-2}}{2 h}, & j = K+2, \\[6pt]
\displaystyle
4 \phi''_{2 F} \frac{-u_{j+1} +2 u_j - u_{j-1}}{h^2}, & K+3 \leq j
\leq N.
\end{cases}
\end{split}
\end{equation*}
Similarly, $\gb$ is given by
\begin{equation}
\label{ghost}
g_j = \begin{cases}
0, & 0 \leq j \leq K-2, \\
-\half \phi'_{2F} /h, & j = K-1, \\
\half \phi'_{2F} /h, & j = K, \\
\half \phi'_{2F} /h, & j = K+1, \\
-\half \phi'_{2F} /h, & j = K+2, \\
0, &  K+3 \leq j \leq N.\\
\end{cases}
\end{equation}
For space reasons, we only list the entries for $0\le j\le N.$  The equations
for all other $j\in\Int$ follow from symmetry and periodicity.
Due to the symmetry in the definition of the atomistic and continuum regions,
we have that $L^{qc,h}_{i,j} =  L^{qc,h}_{-i,-j}$  and
$g_{j} = -g_{-j}$ for $-N+1 \leq i,j \leq 0.$  To see this, we define
the involution operator $(S\ub)_j=-u_{-j}$ and observe that
$\E^{qc,h}(S\ub)=\E^{qc,h}(\ub).$
It then follows from the chain rule that
\[
S^TL^{qc,h} S\ub-S^T\gb-S^T\fb =L^{qc,h} \ub-\gb -\fb
\quad \text{for all periodic }\ub\text{ and }\fb.
\]
Since $S^T=S,$ we can
conclude that
\begin{equation}\label{S}
SL^{qc,h} S = L^{qc,h}\quad\text{and}\quad Sg=g.
\end{equation}
Note that the expression for $\gb$
does not depend on $\phi'_F$ since the first-neighbor terms
identically sum to zero in the energy~\eqref{qceTot}.
We can now observe that the QCE approximation~\eqref{qc1}
is not consistent with the continuum limit of the atomistic model~\eqref{bvp}.

The linear operator $L^{qc}$ has all uniform translations,
$\ub = c \oneb = (c, c, \dots, c),$ in its nullspace.
To see that this is the full nullspace, we consider the factored operator
$L^{qc} = D^T E^{qc} D,$ where $(D \ub)_j = \frac{u_{j+1} - u_j}{h}$ and
\begin{equation*}
(E^{qc} \rb)_j = \begin{cases}
 \phi''_{2 F} r_{j-1} + (\phi''_{F}+ 2 \phi''_{2 F}) r_{j} +
 \phi''_{2 F} r_{j+1}, & 0 \leq j \leq K-2, \\
 \phi''_{2 F} r_{j-1} + (\phi''_{F}+ \frac32 \phi''_{2 F}) r_{j} +
\frac12 \phi''_{2 F} r_{j+1}, & j =K-1, \\
\frac12 \phi''_{2 F} r_{j-1} + (\phi''_{F}+ 3 \phi''_{2 F}) r_{j} +
\frac12 \phi''_{2 F} r_{j+1}, & j = K, \\
\frac12 \phi''_{2 F} r_{j-1} + (\phi''_{F}+ \frac92 \phi''_{2 F}) r_{j},
& j = K+1, \\
(\phi''_{F}+ 4 \phi''_{2 F}) r_{j},
& K+2 \leq j \leq N.
\end{cases}
\end{equation*}
We see that $E^{qc}$ is diagonally dominant provided
$\phi''_{F} + 5 \phi''_{2F}>0,$ hence assumption~\eqref{assume2}
implies $E^{qc}$ is invertible.
So we have that the nullspace of
$L^{qc}$ is precisely the nullspace of $D.$
Thus, $L^{qc} \ub = \gb$ has a solution whenever $\sum_{j=-N+1}^N f_j = 0,$
since
$\sum_{j=-N+1}^N g_j = 0.$ This solution is unique up to
a constant.

We now gather together the existence and uniqueness results stated
for the models.
\begin{lemma}
\label{wellposed}
If $\sum_{j=-N+1}^N f_j = 0$ and $\phi''_F + 4 \phi''_{2F} > 0,$ then the
linearized atomistic energy~\eqref{at} and continuum approximation~\eqref{cont}
both have a global minimum that is unique up to an additive constant.

Under the slightly stronger assumption $\phi''_F + 5 \phi''_{2F} > 0,$ the quasicontinuum
energy~\eqref{qceTot} has a unique minimizer up to a constant.
\end{lemma}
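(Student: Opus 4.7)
The plan is to exploit that all three energies are quadratic in $\ub$ and invariant under $\ub \mapsto \ub + c\oneb$: each has a minimizer unique up to an additive constant if and only if the associated symmetric linear operator has null space exactly $\spn\{\oneb\}$ and the right-hand side is orthogonal to that null space. The compatibility $\sum_{j=-N+1}^N f_j = 0$ is the stated hypothesis, and $\sum_j g_j = 0$ follows by direct inspection of~\eqref{ghost}, so the whole lemma reduces to identifying each null space with $\spn\{\oneb\}$.

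For this I would use the common factorization $L = D^T E\, D$ with $(D\ub)_j = (u_{j+1}-u_j)/h$, whose kernel is $\spn\{\oneb\}$; hence $\ker L = \spn\{\oneb\}$ as soon as $E$ is invertible. In the continuum case $E^c = (\phi''_F + 4\phi''_{2F})\,I$ is a scalar multiple of the identity, trivially invertible under the first hypothesis. In the atomistic case $E^a$ is the symmetric circulant with Fourier symbol $\phi''_F + 2\phi''_{2F}(1+\cos(kh))$, and a short summation-by-parts computation gives
\begin{equation*}
\langle L^{a,h}\ub,\ub\rangle_h \;=\; \sum_k \frac{4\sin^2(kh/2)}{h^2}\bigl[\phi''_F + 4\phi''_{2F}\cos^2(kh/2)\bigr]\,|\hat u_k|^2.
\end{equation*}
Under the standing assumption~\eqref{assume} ($\phi''_{2F}<0$), the bracket is bounded below by $\phi''_F + 4\phi''_{2F}>0$, so the only vanishing mode is $k=0$, i.e.\ $\ker L^{a,h}=\spn\{\oneb\}$, and existence follows by the Fredholm alternative.

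For the quasicontinuum case I would invoke the explicit factorization $L^{qc,h} = D^T E^{qc} D$ already displayed in the text and verify strict diagonal dominance of $E^{qc}$ row by row, using~\eqref{S} to reduce to $j\ge 0$. The interior rows, the row at $j=K-1$, and the row at $j=K$ give respectively $\phi''_F + 4\phi''_{2F}>0$, $\phi''_F + 3\phi''_{2F}>0$, and $\phi''_F + 4\phi''_{2F}>0$, each implied by~\eqref{assume2} combined with $\phi''_{2F}<0$. The binding case is $j=K+1$, whose row $(\tfrac12\phi''_{2F},\,\phi''_F+\tfrac92\phi''_{2F})$ has only a single off-diagonal neighbor and therefore requires exactly $\phi''_F + 5\phi''_{2F}>0$. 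Strict diagonal dominance with positive diagonal yields invertibility of $E^{qc}$, hence $\ker L^{qc,h}=\spn\{\oneb\}$, and solvability of $L^{qc,h}\ub=\fb+\gb$ then follows from the zero-mean of $\fb+\gb$. The main obstacle is the row-by-row bookkeeping at the four interface rows $j=K-1,K,K+1,K+2$: the asymmetry that makes $j=K+1$ the binding case is precisely what forces the stronger hypothesis~\eqref{assume2} in the QC setting, whereas the weaker condition $\phi''_F + 4\phi''_{2F}>0$ already suffices in the atomistic and continuum models.
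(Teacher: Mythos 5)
Your proposal is correct and follows essentially the same route as the paper: the lemma there is just a summary of the preceding discussion, which establishes the quasicontinuum case via the very factorization $L^{qc}=D^TE^{qc}D$ and the diagonal-dominance check you carry out (with the $j=K+1$ row indeed forcing $\phi''_F+5\phi''_{2F}>0$), and asserts the atomistic and continuum cases under $\phi''_F+4\phi''_{2F}>0$. Your Fourier-symbol verification for the atomistic operator supplies detail the paper leaves implicit, but it is the same positive-definiteness-on-mean-zero argument.
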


Here, and in the following, we take $\fb = \zerob,$ in order to focus
on the effect of the ghost force $\gb.$ Under this assumption, we can
conclude that the unique mean zero solution to the QCE equilibrium
equations~\eqref{qc1} is odd.  This follows from $S^{-1}=S$ and
\eqref{S} which together imply that $S\ub$ is a solution if and only
if $\ub$ is.  Because $S$ preserves the mean zero property, we
conclude that $\ub_{qc}$ is odd.
The unique odd solution to the atomistic equations,
$L^{a,h} \ub_a  = \zerob,$ is $\ub_a =\zerob.$ Thus, the QCE
equilibrium equations,
\begin{equation}
\label{elqce}
L^{qc,h} \ub_{qc} - \gb=\zerob,
\end{equation}
are also the error equations, and the quasicontinuum solution is
the error in approximating $\ub_a.$

\subsection{Discrete Sobolev Norms}
The effect of the interface terms on the total error is
norm-dependent, so we now employ discrete analogs of Sobolev
norms~\cite{ortnersuli}.  We define the discrete weak derivative by
\begin{equation*}
u'_j = \frac{u_{j+1} - u_{j}}{h}.
\end{equation*}
For $1 \leq p < \infty$ the discrete Sobolev norms are given by
\begin{equation*}
\begin{split}
\lpnorm{u}{p} &= \left(\sum_{j=-N+1}^{N} h |u_j|^p\right)^{1/p},  \\
\wpnorm{u}{p} &= \lpnorm{u}{p}
               + \lpnorm{u'}{p},
\end{split}
\end{equation*}
and for $p = \infty$ by
\begin{equation*}
\begin{split}
\lpnorm{u}{\infty} &= \max_{-N+1 \leq j \leq N} |u_j|,  \\
\wpnorm{u}{\infty} &= \lpnorm{u}{\infty}
               + \lpnorm{u'}{\infty}.
\end{split}
\end{equation*}
The above discrete Sobolev norms are equivalent to the standard Sobolev
norms restricted to the continuous, piecewise linear interpolants
$u(x)$ satisfying $ u(j/N)=u_j$ for $j = -N+1,\dots,N.$

\section{Convergence of the Quasicontinuum Solution}
\label{sec:comp}

We now analyze the quasicontinuum error, $\ub_{qc}.$  We note that is
it theoretically possible to solve~\eqref{elqce} explicitly for $\ub_{qc};$
however, the form of the solution is complicated by the second-neighbor
coupling in the atomistic region, so we instead obtain estimates for the decay
of the error, $\ub_{qc},$ by analyzing a O($h^2$)-accurate approximation of the
error.
Figure~\ref{qc_comp} shows the results of solving~\eqref{elqce} numerically
for odd solutions, $u_j = -u_{-j},$ with three choices of lattice spacing
and two sets of parameters. Note that for both sets of parameters, the
magnitude decays linearly with $h,$ whereas the displacement gradient
is O($1$) in the atomistic to continuum region.
The following argument proves the
qualitative error behavior analytically.
\begin{figure}
\begin{center}
\includegraphics[height=7.5in]{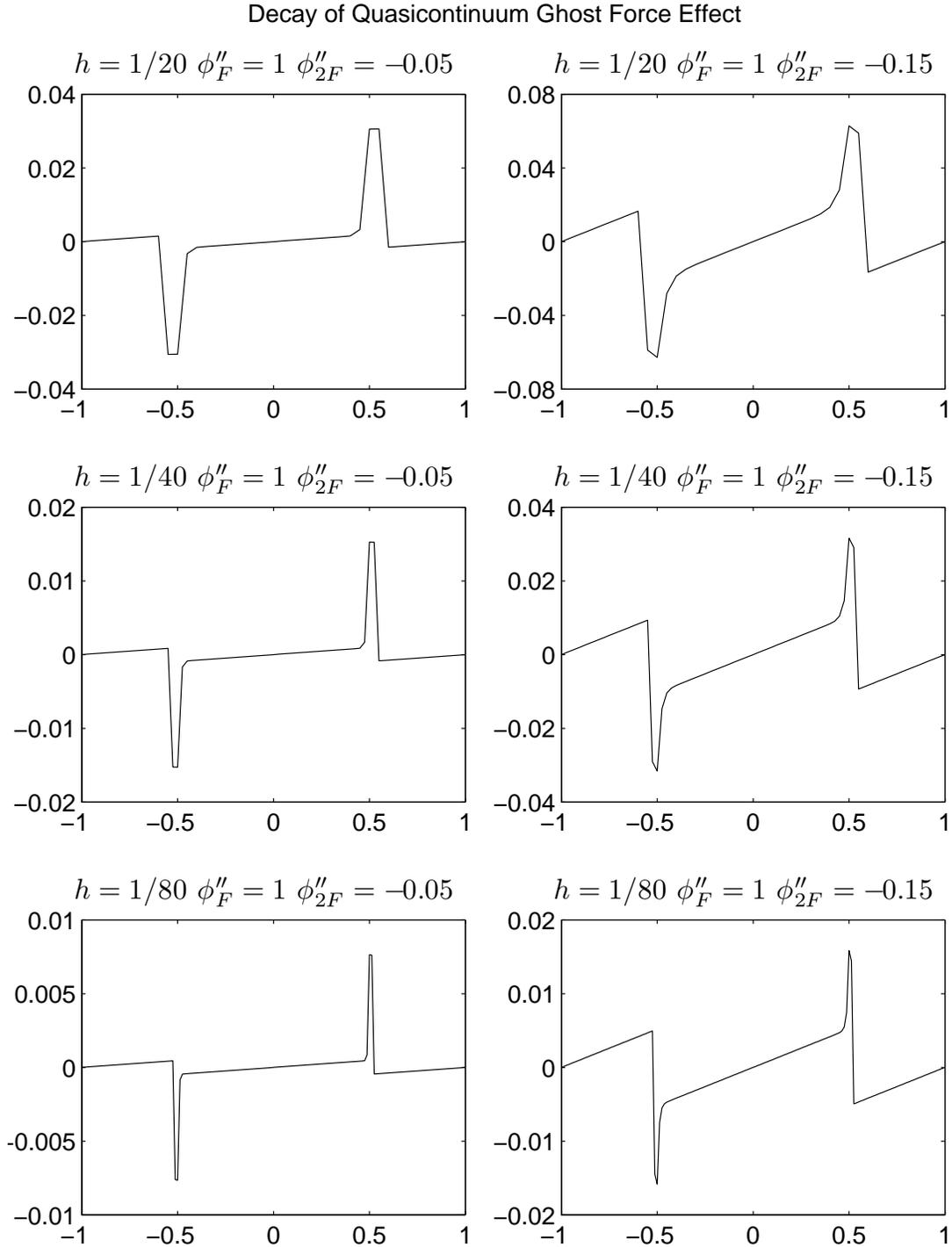}
\end{center}
\caption{\label{qc_comp}
Error for the energy-based quasicontinuum
approximation, $\ub_{qc}.$
We observe that the magnitude of the error is O($h$).
However, the oscillation near the interface means that the
error in the displacement gradient is O($1$)
in the interfacial region. The average deformation gradient, $F,$ for the right
column is close to failing the stability condition
$\phi''_{F} + 5 \phi''_{2 F} > 0.$ In all plots $K = N/2$ and
$\phi'_{2F} = 1.$}
\end{figure}

\subsection{Form of the Solution}
In the interior of the continuum region the solution is linear, but
in the atomistic region $\ub_{qc}$ is the sum of a linear
solution and
exponential solutions.
The homogeneous atomistic difference scheme
\begin{equation}\label{ato}
- \phi''_{2F} u_{j+2} - \phi''_F u_{j+1} + (2 \phi''_F + 2 \phi''_{2F}) u_j - \phi''_F u_{j-1} - \phi''_{2F} u_{j-2}
= 0
\end{equation}
has characteristic equation
\begin{equation*}
- \phi''_{2F} \Lambda^2 - \phi''_F \Lambda + (2 \phi''_F + 2 \phi''_{2F})
- \phi''_F \Lambda^{-1} - \phi''_{2F} \Lambda^{-2}
= 0,
\end{equation*}
with roots
\begin{equation*}
1, 1, \lambda, \frac{1}{\lambda},
\end{equation*}
where
\begin{equation*}
\lambda=\frac{(\phi''_F + 2 \phi''_{2F})
    + \sqrt{(\phi''_F)^2 + 4 \phi''_F \phi''_{2F}}}{-2 \phi''_{2F}}.
\end{equation*}
Based on the assumptions on $\phi$ in~\eqref{assume} and~\eqref{assume2}, we
have that $\lambda > 1.$  We note that if $\phi''_{2F}$ were positive
contrary to assumption~\eqref{assume}, then $\lambda$ would
be negative which would give a damped oscillatory error in the
atomistic region.  General solutions of the homogeneous atomistic
equations~\eqref{ato} have the form $u_j = C_1 + C_2 h j + C_3 \lambda^j + C_4
\lambda^{-j},$ but seeking an odd solution reduces this to the form
$u_j = C_2 hj + C_3 (\lambda^j - \lambda^{-j}).$

The odd solution of the quasicontinuum error equations~\eqref{elqce}
is thus of the form
\begin{equation*}
(\ub_{qc})_j = \begin{cases}
m_1 hj + \beta \left(\frac{\lambda^j - \lambda^{-j}}{\lambda^K}\right),
& 0 \leq j \leq K, \\
m_2 hj - m_2 + \tilde{u}_{K+1} , & j = K+1, \\
m_2 hj -m_2, & K+2 \leq j \leq N,
\end{cases}
\end{equation*}
where expressing the unknown $u_{K+1}$ using a perturbation of the
linear solution, $\tilde{u}_{K+1},$ simplifies the solution
of the equilibrium equations.
The four coefficients $m_1,\ m_2, \tilde{u}_{K+1}, \text{ and } \beta$ can be
found by satisfying the four equations in the interface, $j = K-1,\dots,K+2.$
Summing the equilibrium equations across the interface gives
\begin{equation*}
\begin{split}
0 &= \sum_{j=K-1}^{K+2} g_j = \sum_{j=K-1}^{K+2} (L^{qc,h} \ub_{qc})_j \\
  &= \phi''_F \left. \frac{u_{K-1} - u_{K-2}}{h^2} \right.
     + 4 \phi''_{2F} \left.
	          \frac{u_K + u_{K-1} - u_{K-2} - u_{K-3}}{4 h^2} \right.\\
  &  \qquad - (\phi''_F + 4 \phi''_{2F}) \left( \frac{u_{K+3} - u_{K+2}}{h^2} \right)\\
  &= (\phi''_F + 4 \phi''_{2F}) \left(\frac{m_1}{h} - \frac{m_2}{h}\right).
\end{split}
\end{equation*}
The cancellation of the exponential terms in the final equality holds
because
\begin{equation*}
\phi''_{2F}(\lambda^{K}-\lambda^{-K})+(\phi''_F+\phi''_{2F})(\lambda^{K-1}-\lambda^{-K+1}
-\lambda^{K-2}+\lambda^{-K+2})+\phi''_{2F}(-\lambda^{K-3}+\lambda^{-K+3})=0,
\end{equation*}
which can be seen by summing \eqref{ato} with the homogeneous solution
$u_j=-\lambda^j$ for $j=-K+2,\dots,K-2.$
Thus $m_1=m_2,$ that is, the slope of the linear part
does not change across the interface.
Hence, the odd solution is given by
\begin{equation}
\label{qcform}
(\ub_{qc})_j = \begin{cases}
m hj + \beta \left(\frac{\lambda^j - \lambda^{-j}}{\lambda^K}\right),
   & 0 \leq j \leq K, \\
m hj - m + \tilde{u}_{K+1} , & j = K+1, \\
m hj -m, & K+2 \leq j \leq N,
\end{cases}
\end{equation}
where the coefficients $m, \tilde{u}_{K+1}, \text{ and } \beta$ can now be found
by satisfying any three of the equations in the interface, $j = K-1,\dots,K+2.$

\subsection{Magnitude of the Solution}
We focus on the equations at $j = K-1, K+1, \text{ and } K+2$ and split the
interface equations as $(A_K + h B) \xb = h \bb,$ where
\begin{equation*}
\begin{split}
A_K &= \left[
\begin{array}{rrr}
 \frac{1}{2} \phi''_{2F}
  & -\frac{1}{2} \phi''_{2F}
  &\phi''_{2F} \gamma_{K+1} - \frac{1}{2} \phi''_{2F} \gamma_{K-1} \\[3pt]
-\phi''_F -\frac{5}{2} \phi''_{2F}
  &2\phi''_F +\frac{13}{2}\phi''_{2F}
  &-\phi''_F\gamma_{K} -2\phi''_{2F} \gamma_{K} - \frac{1}{2} \phi''_{2F} \gamma_{K-1} \\[3pt]
- \frac{1}{2} \phi''_{2F}
  & -\phi''_F - 4 \phi''_{2F}
  &- \frac{1}{2} \phi''_{2F} \gamma_{K}
\end{array}
\right], \\
B &= \left[
\begin{array}{rrr}
 \phi''_{2F} & 0 & 0 \\[3pt]
-\phi''_{2F} & 0 & 0\\[3pt]
 \phi''_{2F} & 0 & 0
\end{array}
\right], \quad
\xb =
\left[
\begin{array}{r}
m \\
\tilde{u}_{K+1} \\
\beta
\end{array}
\right], \quad
\bb = \frac{1}{2} \phi'_{2F}  \left[
\begin{array}{r}
-1 \\
 1 \\
-1
\end{array}
\right],
\end{split}
\end{equation*}
and $\gamma_j = \frac{\lambda^j - \lambda^{-j}}{\lambda^{K}}.$
We note that $A_K,$ $B,$ and $\bb$ do not depend on $h$ directly,
though $A_K$ may have indirect dependence if $K$ scales with $h$ as in
Figure~\ref{qc_comp}.
Therefore, we can neglect $B$ and conclude that $\xb$ is $O(h)$ provided
that $A_K^{-1}$ exists and is bounded uniformly in $K.$

\begin{lemma}
\label{fullrank}
For all $K$ satisfying $2\le K\le N-2,$ the matrix $A_K$ is nonsingular and
$||A_K^{-1}|| \leq C$ where $C>0$ is
independent of $K.$
\end{lemma}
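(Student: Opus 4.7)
The plan is to exploit the fact that $A_K$ depends on $K$ only through $\gamma_{K-1}$, $\gamma_K$, $\gamma_{K+1}$, which are uniformly bounded for $K\ge 2$ and converge to $\lambda^{-1}$, $1$, $\lambda$ respectively as $K\to\infty.$ Writing $\epsilon=\lambda^{-2K}$, we have
\begin{equation*}
\gamma_K = 1-\epsilon,\qquad \gamma_{K-1} = \lambda^{-1}-\lambda\epsilon, \qquad \gamma_{K+1} = \lambda-\lambda^{-1}\epsilon,
\end{equation*}
so $A_K$ is affine in $\epsilon$, and $\epsilon\in(0,\lambda^{-4}]$ whenever $2\le K\le N-2.$ Let $A_\infty$ denote the limit matrix obtained by setting $\epsilon=0$ (equivalently, replacing $\gamma_K$ by $1$ and $\gamma_{K\pm 1}$ by $\lambda^{\pm 1}$). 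Then $\|A_K-A_\infty\|\le C\lambda^{-2K}$, and $\det A_K$ is a polynomial in $\epsilon$ whose value at $\epsilon=0$ equals $\det A_\infty.$

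The heart of the argument is to prove $\det A_\infty\ne 0.$ I would expand by cofactors along the third column, which is the only column of $A_\infty$ carrying $\lambda^{\pm 1}$ factors. The resulting expression is a combination of products of $\lambda^{\pm 1}$ with $2\times 2$ minors that depend only on $\phi''_F$ and $\phi''_{2F}$; it can then be collapsed to a rational function of $\phi''_F$ and $\phi''_{2F}$ alone by substituting
\begin{equation*}
\lambda+\lambda^{-1} = -\frac{\phi''_F+2\phi''_{2F}}{\phi''_{2F}},
\end{equation*}
which follows from rewriting the characteristic equation for $\lambda$ in the variable $s=\lambda+\lambda^{-1}.$ Assumptions~\eqref{assume} and~\eqref{assume2} should then force the simplified expression to be nonzero (in fact sign-definite), with $\phi''_F+5\phi''_{2F}>0$ entering the positivity argument in an essential way.

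Once $\det A_\infty\ne 0$ is in hand, the conclusion follows by a standard perturbation argument: since $\|A_K-A_\infty\|\le C\lambda^{-2K}$, for $K$ larger than some threshold $K_0$ a Neumann series gives $\|A_K^{-1}\|\le 2\|A_\infty^{-1}\|.$ For the finitely many remaining indices $2\le K<K_0$, invertibility of $A_K$ can be verified directly. Equivalently, $\det A_K=P(\epsilon)$ is a polynomial in $\epsilon$ with $P(0)=\det A_\infty\ne 0$, and it suffices to check that $P$ has no zeros on the finite set $\{\lambda^{-4},\dots,\lambda^{-2(K_0-1)}\}.$ Taking the maximum of the reciprocals of these finitely many determinants together with the perturbative bound yields the uniform constant~$C.$

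The main obstacle is the algebraic step of confirming $\det A_\infty\ne 0.$ The cofactor expansion is linear in the three third-column entries, but the resulting expression is cubic in $(\phi''_F,\phi''_{2F})$ and couples to $\lambda^{\pm 1}$; it is not immediately clear that the strict inequality~\eqref{assume2} (as opposed to the weaker $\phi''_F+4\phi''_{2F}>0$ already needed for $\lambda>1$) is what is needed. The leverage should come from the substitution above: after eliminating $\lambda+\lambda^{-1}$, I expect the resulting quantity to factor in a way that makes positivity under~\eqref{assume2} transparent, explaining why assumption~\eqref{assume2} is precisely the quasicontinuum well-posedness threshold identified in Lemma~\ref{wellposed}.
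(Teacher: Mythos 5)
Your overall framework---observe that $A_K$ depends on $K$ only through $\gamma_{K\pm1},\gamma_K$, pass to the limit matrix $A_\infty$, and combine $\det A_\infty\neq 0$ with a perturbation bound---is sound and is in fact the same shape as the paper's argument for uniformity (the paper also takes $K\to\infty$ and uses continuity of the inverse at the nonsingular limit). Your substitutions $\gamma_K=1-\epsilon$, $\gamma_{K\pm1}=\lambda^{\pm1}-\lambda^{\mp1}\epsilon$ with $\epsilon=\lambda^{-2K}$, and the identity $\lambda+\lambda^{-1}=-(\phi''_F+2\phi''_{2F})/\phi''_{2F}$, are all correct. However, there are two genuine gaps. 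First, the step you yourself call the heart of the argument, $\det A_\infty\neq 0$, is never carried out: you say the assumptions ``should'' force positivity and that you ``expect'' a favorable factorization. This is precisely the nontrivial content of the lemma. The paper does the algebra by row-reducing $A_K$ to upper triangular form; the first two pivots are $\tfrac12\phi''_{2F}$ and $-\phi''_F-\tfrac92\phi''_{2F}$, and the third pivot $\eta_K$ is bounded below by
\begin{equation*}
\left(\phi''_F+\left(4+\tfrac{1}{\sqrt2}\right)\phi''_{2F}\right)\left(\phi''_F+\left(4-\tfrac{1}{\sqrt2}\right)\phi''_{2F}\right)\left(2\gamma_{K+1}-\gamma_K-\gamma_{K-1}\right)>0 ,
\end{equation*}
where obtaining this bound requires absorbing a negative cross term using the inequality $\gamma_K-\gamma_{K-1}<2\gamma_{K+1}-\gamma_K-\gamma_{K-1}$. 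Your sketch has no analogue of this absorption step, and note that the factorization shows the true threshold for this lemma is $\phi''_F+(4+\tfrac{1}{\sqrt2})\phi''_{2F}>0$, so~\eqref{assume2} is sufficient here but not, as you conjecture, the exact boundary.

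Second, your treatment of small and intermediate $K$ does not close. Since only the third column of $A_K$ involves $\epsilon$, $\det A_K$ is \emph{affine} in $\epsilon$, so even with $\det A_\infty\neq 0$ it could vanish at exactly one admissible value of $\epsilon$, i.e.\ at one value of $K$. ``Invertibility can be verified directly'' is not a proof: the verification must hold for \emph{every} pair $(\phi''_F,\phi''_{2F})$ satisfying~\eqref{assume} and~\eqref{assume2}, and the threshold $K_0$ in your Neumann-series step depends on $\lambda$ and $\|A_\infty^{-1}\|$, hence on the potential, so the residual set of $K$ is not a fixed finite list that one could check once and for all. The paper avoids this entirely by proving $\eta_K>0$ for every $K\ge 2$ (the lower bound above is uniform in $K$ because $2\gamma_{K+1}-\gamma_K-\gamma_{K-1}>0$ for all such $K$), which gives nonsingularity for all $K$ at once; the limit argument is then needed only for the uniform bound on $\|A_K^{-1}\|$. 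To repair your version you would need to show that the affine function $\epsilon\mapsto\det A_K$ is sign-definite on the whole interval $[0,\lambda^{-4}]$, e.g.\ by checking both endpoints, which amounts to redoing the paper's computation anyway.
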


\begin{proof}
Applying row reductions gives the upper triangular form
\begin{equation*}
\begin{split}
\widetilde{A} &= \left[
\begin{array}{rrr}
\frac{1}{2} \phi''_{2F} & -\frac{1}{2} \phi''_{2F} & \phi''_{2F} \gamma_{K+1} -
\frac{1}{2} \phi''_{2F} \gamma_{K-1} \\[3pt]
0       & -\phi''_F - \frac{9}{2} \phi''_{2F} & \phi''_{2F} \gamma_{K+1} -
		  \frac{1}{2} \phi''_{2F} \gamma_K  - \frac{1}{2} \phi''_{2F}
\gamma_{K-1}\\[3pt]
0       &  0 & \eta_K
\end{array}
\right]
\end{split}
\end{equation*}
where
\begin{equation*}
\eta_K = \textstyle  \left(
(\phi''_F)^2 + \frac{15}{2} \phi''_F \phi''_{2F} + \frac{53}{4}(\phi''_{2F})^2
      \right) \left(2 \gamma_{K+1} - \gamma_K - \gamma_{K-1}\right)
   + \frac{1}{2} \phi''_{2F} \left(\phi''_F  + \frac{9}{2}
   \phi''_{2F}\right) \left(\gamma_K - \gamma_{K-1}\right).
\end{equation*}
If the diagonal entries of $\widetilde{A}$ are
non-zero, then $A_K$ is nonsingular.
The coercivity assumption $\phi''_F + 5 \phi''_{2F} >0$~\eqref{assume2}
implies that
$-\phi''_F - 9/2 \phi''_{2F} < 0$ since $\phi''_{2F} <0,$ so the first
and second
diagonal entries are non-zero. Since the second term of $\eta_K$ is negative,
we can use the fact that
$\gamma_K-\gamma_{K-1} < 2 \gamma_{K+1} - \gamma_K - \gamma_{K-1}$ to see that
\begin{equation*}
\begin{split}
\eta_K &> \textstyle  \left(
   (\phi''_F)^2 + 8 \phi''_F \phi''_{2F} + \frac{62}{4} (\phi''_{2F})^2\right)
 \left(2 \gamma_{K+1} - \gamma_K - \gamma_{K-1}\right)\\
  &= \textstyle \left(\phi''_F
           + \left(4 + \frac{1}{\sqrt{2}}\right) \phi''_{2F}\right)
	  \left(\phi''_F + \left(4 - \frac{1}{\sqrt{2}}\right) \phi''_{2F}\right)
 \left(2 \gamma_{K+1} - \gamma_K - \gamma_{K-1}\right)\\
&> 0.
\end{split}
\end{equation*}
Therefore, $A^{-1}_K$ exists for all $K.$  Taking limits, we find
\begin{equation*}
\lim_{K\to\infty} \eta_K \geq
\textstyle \left(\phi''_F
           + \left(4 + \frac{1}{\sqrt{2}}\right) \phi''_{2F}\right)
	  \left(\phi''_F + \left(4 - \frac{1}{\sqrt{2}}\right) \phi''_{2F}\right)
 \left(2 \lambda - 1  - \lambda^{-1}\right) > 0,
\end{equation*}
where we note that the elementary matrices corresponding to the row reduction
operations did not depend on $K$ so that $\lim_{K\to\infty} A_K$ is
nonsingular.
The inverse of a matrix
is continuous as a function of the entries whenever the matrix is nonsingular.
Thus, the fact that $\lim_{K\to\infty} A_K$ is nonsingular implies that
$\lim_{K\to\infty} ||A_K^{-1}||$ is finite.
Since $||A_K^{-1}||$ is finite for all $K$ and
$\lim_{K\to\infty} ||A_K^{-1}||$ is finite, we conclude that
$||A_K^{-1}||$ is uniformly bounded.
\end{proof}

Thus, we have that $m, \tilde{u}_{K+1},$ and $\beta$ are all O($h$).  We can
express the derivative, $\ub'_{qc},$ as
\begin{equation*}
(\ub'_{qc})_j = \begin{cases}
m + \frac{\beta}{h}
  \left( \frac{\lambda^{j+1} - \lambda^{-j-1}}{\lambda^K }
  - \frac{\lambda^j - \lambda^{-j}}{\lambda^K } \right), & 0 \leq j \leq K-1, \\
m - \frac{m}{h} + \frac{\tilde{u}_{K+1}}{h}  - \frac{\beta}{h}
    \left( \frac{\lambda^K - \lambda^{-K}}{\lambda^K }\right), & j = K, \\
m - \frac{\tilde{u}_{K+1}}{h},                                 & j = K+1, \\
m,                                                        & K+2\leq j \leq N-1,
\end{cases}
\end{equation*}
where $u'_{-j-1} = u'_j$ for $j = 0, \dots, N-1.$

\begin{theorem}
\label{thm2}
Let $\ub_{qc}$ be the solution to the QC error equation~\eqref{elqce}.
Then for $1 \leq p \leq \infty, $ $2\le K\le N-2,$ and $h$ sufficiently small,
the error can be bounded by
\begin{equation*}
\begin{split}
\lpnorm{\ub_{qc}}{\infty} &\leq C h, \\
\wpnorm{\ub_{qc}}{p} &\leq C h^{1/p},
\end{split}
\end{equation*}
where $C>0$ is independent of $h, K,$ and $p.$
\end{theorem}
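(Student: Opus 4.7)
The plan is to exploit the explicit formula~\eqref{qcform} for $\ub_{qc}$ together with Lemma~\ref{fullrank}, which implies $|m|, |\tilde{u}_{K+1}|, |\beta| = O(h)$ with a constant independent of $K$. Since $\lambda > 1$ is fixed (determined by $\phi''_F$ and $\phi''_{2F}$), all sums of powers of $\lambda$ can be dominated by infinite geometric series independent of $K$, which yields the uniformity claimed in the theorem.

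For the $\ell^\infty$ bound I would just read off~\eqref{qcform}. In the continuum region $K+1 \le j \le N$ each term is bounded by $|m|(|hj|+1) + |\tilde{u}_{K+1}|$, which is $O(h)$ because $|hj| \le 1$. In the atomistic region $0 \le j \le K$, the identity $|\lambda^j-\lambda^{-j}|/\lambda^K \le 1$ (valid because $\lambda > 1$ and $j \le K$) gives $|(\ub_{qc})_j| \le |m|+|\beta| = O(h)$. Odd symmetry extends the estimate to negative indices, yielding $\lpnorm{\ub_{qc}}{\infty} \le Ch$.

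For the $w^{1,p}$ bound I would split $\sum h |u'_j|^p$ over three regions using the expression for $u'_j$ stated just before the theorem. In the continuum interior ($K+2 \le j \le N-1$ and its mirror) we have $u'_j = m = O(h)$, contributing $O(h^p)$. At the two interface points $j=K$ and $j=K+1$ the dominant terms $m/h$ and $\tilde{u}_{K+1}/h$ are $O(1)$, so these contribute $2h \cdot O(1) = O(h)$. In the atomistic interior $0 \le j \le K-1$, the identity $\lambda^{j+1}-\lambda^j = \lambda^j(\lambda-1)$ (and its analogue for negative powers) yields $u'_j = m + (\beta/h)(\lambda-1)(\lambda^{j-K}+\lambda^{-j-1-K})$, whose magnitude is dominated by $O(\lambda^{j-K})$; summing the resulting geometric series gives
\begin{equation*}
\sum_{j=0}^{K-1} h|u'_j|^p \le Ch\sum_{k=1}^{K}\lambda^{-kp} \le \frac{Ch}{1-\lambda^{-p}} = O(h),
\end{equation*}
uniformly in $K$ and in $p \ge 1$ since $1/(1-\lambda^{-p}) \le 1/(1-\lambda^{-1})$. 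Collecting yields $\lpnorm{u'}{p}^p = O(h)$, hence $\lpnorm{u'}{p} = O(h^{1/p})$; combined with $\lpnorm{\ub_{qc}}{p} \le 2^{1/p}\lpnorm{\ub_{qc}}{\infty} = O(h)$, the $w^{1,p}$ bound follows. The case $p=\infty$ is handled analogously with maxima in place of sums.

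The main conceptual work has already been done in deriving~\eqref{qcform} and in Lemma~\ref{fullrank}, so the remainder is careful bookkeeping. The one subtle point, and the main place error could enter, is the interface-adjacent derivatives $u'_K$ and $u'_{K+1}$, where dividing $O(h)$ coefficients by $h$ produces the $O(1)$ derivatives that account for the ghost-force effect. Verifying the claim that $C$ is independent of $p$ also requires noting that all intermediate constants (the geometric series and the prefactor $2^{1/p}$) are uniformly bounded for $p \ge 1$.
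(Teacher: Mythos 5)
Your proposal is correct and follows essentially the same route as the paper: both rest on the explicit form~\eqref{qcform}, the uniform $O(h)$ bounds on $m$, $\tilde{u}_{K+1}$, $\beta$ from Lemma~\ref{fullrank}, an $O(h)$ contribution of size $h\cdot O(1)^p$ from the two interface derivatives, and a geometric series in $\lambda^{-p}$ for the atomistic region (the paper separates terms by the triangle inequality on the norm where you partition the index set, a cosmetic difference). The only nit is that your displayed sum over the atomistic interior drops the $m$ contribution, which should be carried along as an extra $O(h^p)$ term before taking $p$-th roots, but this does not affect the conclusion.
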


\begin{proof}
The result for the $\ell^{\infty}$ norm follows from the fact that
all terms in $\eqref{qcform}$ are O($h$).  To show the bound on
$w^{1,p},$ we first apply the triangle inequality to separate the $m,
\frac{\tilde{u}_{k+1}}{h}, \frac{m}{h},$ and $\frac{\beta}{h}$ terms
which we bound using the fact that $\tilde{u}_{K+1}, m,$ and $\beta$
are $O(h).$ We have
\begin{equation*}
\begin{split}
\wpnorm{\ub_{qc}}{p} &= \lpnorm{\ub_{qc}}{p} + \lpnorm{\ub_{qc}'}{p} \\
&\leq \lpnorm{\ub_{qc}}{p} + |m|
  + \left(2 \left| \frac{m}{h}\right|^{p} h \right)^{1/p}
  + \left(4 \left| \frac{\tilde{u}_{K+1}}{h}\right|^{p} h \right)^{1/p}
   \\
&\qquad + 2 \left( h \sum^{K}_{j=-K} \left|\frac{\beta}{h}
                    \frac{(\lambda^{j} - \lambda^{-j})}{\lambda^K}\right|^p
          \right)^{1/p} \\
&\leq C h^{1/p} + \frac{2 |\beta|}{h} \left( 2h \sum^{K}_{j=0}
           \left| \frac{\lambda^j }{\lambda^K} \right|^p
                    \right)^{1/p}\\
&\leq C h^{1/p} + \frac{2 |\beta|}{h} \left( 2 h
  \frac{ \lambda^p}{\lambda^p - 1} \right)^{1/p} \\
&\leq C h^{1/p}. \qedhere
\end{split}
\end{equation*}
\end{proof}

Finally, we show that the pointwise error in the derivative, $\ub'_{qc},$
decays exponentially in $j$ to O($h$) away from the interface in the atomistic
region and decays immediately to O($h$) away from the interface in the continuum region.
\begin{lemma}
There is a $C > 0$ such that
$|(\ub'_{qc})_j| \le C h$
for all $0\le j \leq K + \frac{\ln h}{\ln \lambda}$ and
$K+2 \leq j \leq N.$
Thus, the interface has size O($h |\log h|$).
\end{lemma}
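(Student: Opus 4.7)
The plan is to substitute the explicit piecewise formula for $(\ub'_{qc})_j$ displayed just above the lemma and bound each case using the consequence of Lemma~\ref{fullrank} that $|m|,|\tilde u_{K+1}|,|\beta|\le Ch$. The continuum region $K+2\le j\le N$ is then immediate: for $K+2\le j\le N-1$ the formula gives $(\ub'_{qc})_j=m$, and the case $j=N$ follows from periodicity together with the odd symmetry (which force $u_N=0$, so $u'_N=u_{-N+1}/h=m$), yielding $|(\ub'_{qc})_j|=|m|\le Ch$ throughout the continuum side.

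The substantive work is the atomistic region $0\le j\le K-1$. My first step is to rewrite the bracketed combination of exponentials in the displayed formula by factoring out $\lambda-1$:
\[
\frac{\lambda^{j+1}-\lambda^{-j-1}}{\lambda^K}-\frac{\lambda^j-\lambda^{-j}}{\lambda^K}
=(\lambda-1)\bigl[\lambda^{j-K}+\lambda^{-j-1-K}\bigr].
\]
Consequently $(\ub'_{qc})_j=m+(\lambda-1)\tfrac{\beta}{h}\bigl[\lambda^{j-K}+\lambda^{-j-1-K}\bigr]$, and using $|\beta|/h\le C$ from Lemma~\ref{fullrank} gives $|(\ub'_{qc})_j|\le Ch+C\lambda^{j-K}+C\lambda^{-j-1-K}$.

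Next I would bound the two exponential pieces separately. The range $0\le j\le K+\ln h/\ln\lambda$ is nonempty only when $K\ge|\ln h|/\ln\lambda$, and under that implicit constraint the right-decaying contribution is controlled uniformly by $\lambda^{-j-1-K}\le\lambda^{-K-1}\le h/\lambda$. The left-growing contribution satisfies $\lambda^{j-K}\le h$ precisely when $j-K\le\ln h/\ln\lambda$, which is the stated upper bound on $j$. Combining the two estimates gives $|(\ub'_{qc})_j|\le Ch$ on the entire indicated range, and the size of the atomistic transition band expressed in physical units is $h\cdot(|\ln h|/\ln\lambda)=O(h|\log h|)$, matching the claimed interface width.

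There is no real obstacle in this proof: the qualitative work has already been done in establishing $\beta=O(h)$ via Lemma~\ref{fullrank}, which is what made the $1/h$ prefactor in front of the exponentials tolerable. The only point that requires care is the factorization isolating $\lambda^{j-K}$ as the genuinely dominant mode, since it is this precise growth rate that matches the exponent $\ln h/\ln\lambda$ appearing in the statement and thus pins down the size of the boundary layer.
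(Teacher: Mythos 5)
Your proposal is correct and follows essentially the same route as the paper: substitute the explicit piecewise formula for $\ub'_{qc}$, use $|m|,|\tilde u_{K+1}|,|\beta|\le Ch$ from Lemma~\ref{fullrank}, and observe that $\lambda^{j-K}\le h$ exactly on the stated range $j\le K+\ln h/\ln\lambda$. The only differences are cosmetic — your factorization $(\lambda-1)(\lambda^{j-K}+\lambda^{-j-1-K})$ versus the paper's single bound by $\lambda^{j+1-K}$, and your explicit handling of $j=N$ — and both are fine.
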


\begin{proof}
For $h$ sufficiently small, we have that
$\max(m, \beta) \leq C h.$
Since $u'_j = m$ for $K+2 \leq j \leq N,$ in this region
$u'_j \le C h.$ For the terms $0 \leq j \leq K-1$ it is sufficient to show that
the exponential term is less than or equal to $C h.$
For $0 \leq j \leq K + \frac{\ln h}{\ln \lambda},$ we have that
\begin{equation*}
\begin{split}
\left( \frac{\lambda^{j+1} - \lambda^{-j-1}}{\lambda^K }
  - \frac{\lambda^j - \lambda^{-j}}{\lambda^K } \right)
& \leq \lambda^{j+1 - K} \\
&\leq \lambda^{K + \frac{\ln h}{\ln \lambda} +1 - K}\\
&\leq C h. \qedhere
\end{split}
\end{equation*}
\end{proof}

\begin{remark}\label{coarsen}
In order reduce the degrees of freedom, the continuum region is
coarsened in computations using the quasicontinuum method.
For simplicity, coarsening was omitted
from the model presented in this paper, but, in fact, the results are unchanged if it is used.
Conventionally, coarsening only occurs away from the atomistic to
continuum interface, so that no degrees of freedom are removed if they
interact directly with the atomistic region.  Since the solution $u_j$
is linear for $K+2 \leq j \leq N,$ any level of coarsening produces
an identical solution.

\end{remark}

}

\end{document}